\renewcommand{\marginpar}[1]{}
\numberwithin{equation}{section}
\theoremstyle{plain}
\newtheorem{proposition}{Proposition}[section]
\newtheorem{theorem}[proposition]{Theorem}
\newtheorem{lemma}[proposition]{Lemma}
\newtheorem*{claim}{Claim}
\newtheorem*{thm}{Theorem}
\theoremstyle{definition} 
\newtheorem*{remark}{Remark}
\newtheorem{fact}[proposition]{Fact}
\newtheorem{definition}[proposition]{Definition}
\newtheorem*{question}{Question}
\newcommand{\Q}{\mathbb{Q}}
\newcommand{\C}{\mathbb{C}}
\newcommand{\N}{\mathbb{N}}
\newcommand{\Z}{\mathbb{Z}}
\newcommand{\tr}{\operatorname{trdeg}}
\newcommand{\rk}{\operatorname{rk}}
\newcommand{\ldim}{\operatorname{ldim}}
\newcommand{\ex}{\operatorname{exp}}
\newcommand{\dd}{\delta}
\newcommand{\defh}[1]{\langle #1 \rangle}
\newcommand{\Spec}{\operatorname{Spec}}
\newcommand{\alg}{\operatorname{alg}}
\def \<{\langle}
\def \>{\rangle}
\begin{document}

\bibliographystyle{plain}

\title{Generic points on exponential curves}
\date{\today}

\author{Ayhan G\"unaydin and Amador Martin-Pizarro}
\address{Centro de Matem\'atica e Aplica\c{c}oes Fundamentais (CMAF), Avenida Professor Gama Pinto, 2, 1649-003 Lisbon, Portugal }
\email{ayhan@ptmat.fc.ul.pt}

\address{Universit\'e de Lyon; CNRS; Universit\'e Lyon 1; INSA de Lyon F-69621; Ecole Centrale de Lyon; Institut Camille Jordan UMR5208, 43 boulevard du 11 novembre 1918, F--69622 Villeurbanne Cedex, France }

\email{pizarro@math.univ-lyon1.fr}

\thanks{The first author was supported by the FCT grant SFRH/BPD/47661/2008. The second author conducted research 
 within the French Research Network {\bf MODIG} ANR-09-BLAN-0047 as well as an Alexander von Humboldt Stiftung
Forschungsstipendium f\"ur erfahrene Wissenschaftler  1137360}
\keywords{Model Theory, Schanuel's conjecture}

\subjclass{03C45, 11U09}

\begin{abstract} 
We show, assuming Schanuel's conjecture, that every irreducible complex polynomial in two variables
 where both variables appear has infinitely many algebraically independent solutions of the form $(z,e^z)$.  
\end{abstract}
\maketitle

\section*{Introduction}
A long-standing relevant conjecture in transcendence theory is Schanuel's conjecture,
which states that given $\Q$-linearly independent complex numbers $a_1,\ldots,a_n$, we have

$$\tr_\Q (a_1,\ldots,a_n, \ex (a_1),\ldots,\ex(a_n))\geq n.$$

\noindent
This conjecture implies formally  Lindemann-Weierstrass Theorem as well as the algebraic independence of $e$ and $\pi$. 
In \cite{Zilber-Pseudoexp}, Zilber noticed that Schanuel's conjecture could be interpreted in the setting of 
a predimension function \`a la Hrushovski. Put

$$\dd(A) := \tr_\Q (A, \ex(A)) - \ldim_\Q A, $$

\noindent
for any finite subset $A$ of $\C$. Then Schanuel's conjecture is equivalent to $\emptyset$ 
being \emph{strong} 
(or \emph{self-sufficient}) in $\C$, that is, $\dd(A)\geq 0$ for any finite dimensional $\Q$-subspace $A$. 
Zilber considered the language $L$ of rings augmented by a unary function 
symbol $\ex$ and $L$-structures (called {\it pseudo-exponential fields}) consisting of algebraically closed fields $F$
of characteristic $0$ equipped with a surjective group homomorphism $\exp$ from its additive group onto its multiplicative 
group whose kernel is generated by a single transcendental element. 

\noindent The above predimension function induces a finitary 
pregeometry in which given a finite subset $A$ of $F$, its $\delta$-closure consists of all the elements $b$ in 
$F$ such that for some finite dimensional $\Q$-subspace $B$ containing $A\cup\{b\}$,
$$ \dd(B/A):= \dd(B)-\dd(A) \leq 0.$$

\noindent Moreover, Zilber's fields $F$ satisfy the 
following extra properties:
\begin{enumerate}
 \item[(SC)] $\dd(A)\geq 0$ for any finite $A\subseteq F$.
 \item[(CCP)] The $\delta$-closure of any finite set is countable.
 \item[(EC)] Given a variety $V \subset F^{2n}$ defining a minimal extension of predimension $0$, 
there is a generic point in $V$ of the form $(z,\ex(z))$. Equivalently, there are infinitely many 
algebraically independent such points in $V$.
\end{enumerate}

Though these conditions are not expressible in first-order logic, the class $\mathcal C$ of pseudo-exponential 
fields is axiomatizable in $\mathcal{L}_{\omega_1,\omega}(Q)$, where $Q$ denotes the quantifier 
\emph{``there exists uncountably many''}. Zilber showed that  $\mathcal{C}$ is  uncountably categorical, 
that is, there is a unique such field in each uncountable cardinal (up to isomorphism). Moreover, 
definable sets are either countable or co-countable. The question then is whether $\C$ is the unique 
such field of cardinality $2^\omega$. By a clever use of Ax's theorem \cite{Ax} on Schanuel's condition for 
the field of Laurent series in one variable, he concluded that $\C$ already satisfies condition CCP. Note that 
since Schanuel's conjecture is part of the axioms, the natural question is then the following:

\begin{question}
Assume $\C$ satisfies SC. Does it follow that $\C$ satisfies EC? 
\end{question}

\noindent
A warm-up case is when the variety is a curve in $\C^2$ given by an irreducible complex polynomial $p$
in two variables where both variables appear. Let $f$ be the entire function given by $f(z)=p(z,\ex(z))$.
In \cite{marker_exp}, Marker  proved that such a function has infinitely many algebraically independent zeros 
if $p$ is in $\Q^{\alg}[X,Y]$.

\medskip\noindent
In this article, we extend Marker's result to all complex polynomials.

\begin{thm}\label{T:main}
Suppose Schanuel's conjecture is true. Then for an irreducible complex polynomial $p$ in two variables 
where both variables appear, the entire  function $f(z):=p(z,\ex(z))$ has infinitely many algebraically independent zeros.
\end{thm}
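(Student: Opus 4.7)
Let $\bar{c}=(c_1,\ldots,c_m)$ be a $\Q$-basis of the $\Q$-span of the coefficients of $p$. Put $K=\Q(\bar{c},\ex(\bar{c}))^{\alg}$ and $s=\tr_\Q K$; Schanuel's conjecture applied to $\bar{c}$ gives $s\geq m$. For every zero $z$ of $f$, the point $(z,\ex(z))$ lies on the affine curve $\{p=0\}$, so $\ex(z)$ is algebraic over $\Q(\bar{c},z)\subseteq K(z)$. Suppose we have zeros $z_1,\ldots,z_n$ of $f$ that are $\Q$-linearly independent modulo $\operatorname{span}_\Q(\bar{c})$, so $\ldim_\Q(\bar{c},\bar{z})=m+n$. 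Applying SC to $(\bar{c},\bar{z})$ together with the algebraicity of each $\ex(z_i)$ over $K(z_i)$ yields
$$m+n\leq\tr_\Q(\bar{c},\ex(\bar{c}),\bar{z},\ex(\bar{z}))=\tr_\Q(\bar{c},\ex(\bar{c}),\bar{z})=s+\tr_\Q(\bar{z}/K),$$
hence $\tr_\Q(\bar{z}/K)\geq n-(s-m)$. Thus $n-(s-m)$ of the $z_i$ are algebraically independent over $K$, and \emph{a fortiori} over $\Q$.

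\textbf{Reduction to an analytic step.} Since $s-m$ is a fixed constant, to obtain infinitely many algebraically independent zeros of $f$ it suffices to produce infinitely many zeros of $f$ that are $\Q$-linearly independent modulo $\operatorname{span}_\Q(\bar{c})$. Equivalently, I must show that the zero set of $f$ is not contained in any finite-dimensional $\Q$-subspace $W$ of $\C$ that contains $\operatorname{span}_\Q(\bar{c})$. The function $f$ is a non-identically-zero entire function of order at most $1$ (since $\ex$ is transcendental over $\C(z)$, we have $f\equiv 0$ iff $p\equiv 0$, ruled out by irreducibility), so by Hadamard's theorem it has infinitely many zeros with counting function of linear growth.

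\textbf{Producing a new zero.} Following Marker's strategy, I would exploit the asymptotic distribution of zeros of the exponential polynomial $f$ (they cluster along finitely many vertical half-strips with imaginary spacing $2\pi/\deg_Y p$) together with a resultant/shift manipulation of $p(X,Y)$ and $p(X+2\pi i,Y)$ to derive a contradiction from the assumption that all zeros lie in a fixed finite-dimensional $\Q$-subspace $W\supseteq\operatorname{span}_\Q(\bar{c})$. Schematically: enough zero-pairs of the form $(z,z+2\pi i)$ would force $\operatorname{Res}_Y(p(X,Y),p(X+2\pi i,Y))\equiv 0$, contradicting the irreducibility of $p$. Iterating then produces the required infinite $\Q$-linearly independent family.

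\textbf{Main obstacle.} The crux is unambiguously the analytic step: showing that the zero set of $f$ cannot be confined to any finite-dimensional $\Q$-subspace of $\C$. Marker's original argument uses that $\operatorname{span}_\Q(\bar{c})\subseteq\overline{\Q}$, permitting algebraic-closure and countability arguments tailored to $\overline{\Q}$. In the general complex-coefficient case, $W$ may contain transcendental elements (including the coefficients themselves) and possibly $2\pi i$, and one must verify that shift and resultant manipulations remain non-degenerate; in particular, the case $2\pi i\in W$ seems to call for a further shift or a refined construction. Once this analytic step is in place, the Schanuel reduction of the first paragraph converts the resulting $\Q$-linearly independent zeros into infinitely many algebraically independent zeros, proving Theorem~\ref{T:main}.
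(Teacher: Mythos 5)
Your opening Schanuel computation is sound and it correctly reduces the theorem to producing infinitely many zeros of $f$ that are $\Q$-linearly independent modulo the span of the coefficients. (Two caveats: Hadamard's factorization alone does not yield infinitely many zeros --- $e^z$ has order $1$ and none --- one also needs the Henson--Rubel / van den Dries result on identities of exponential functions to rule out $p(z,e^z)=q(z)e^{az+b}$; and the paper's reduction is to the stronger statement that $f$ has only \emph{finitely many} zeros in any algebraically closed subfield $K\subseteq\C$ of finite transcendence degree containing $\pi$ and the coefficients, which lets one extract algebraically independent zeros by direct iteration, without your final linear-to-algebraic conversion.)

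The genuine gap is that you have not proved the step you yourself identify as the crux, and the sketch you offer for it does not connect to the goal. If all zeros of $f$ lay in a finite-dimensional $\Q$-subspace $W$, nothing forces the existence of pairs of zeros differing by $2\pi i$, so the resultant $\operatorname{Res}_Y\bigl(p(X,Y),p(X+2\pi i,Y)\bigr)$ never enters the picture; and, as you note, Marker's argument for $p\in\bar\Q[X,Y]$ does not transfer. The paper's route to this step is not analytic but Diophantine, and it occupies all of its Sections 1--3: Schanuel's conjecture bounds the rank of $\ex(K)\cap K^\times$; one then constructs a finite-rank radical subgroup $\Gamma^*$ of $\ex(K)$ capturing all non-degenerate solutions in $\ex(K)$ of linear equations over $K$ (a Mann-type property via Lemma 8.2 of van den Dries and G\"unayd{\i}n), which forces any zero $z\in K$ into $\Q\pi\sqrt{-1}+\Z c_1+\cdots+\Z c_{t'}$ for finitely many fixed $c_j$; specializations into $\bar\Q$ (Laurent's Lemme 4, Hilbert sets) convert $p(z,e^z)=0$ into a vanishing sum of roots of unity and an exponential-polynomial equation over a number field; finally the Dvornicich--Zannier theorem bounds the order of the root of unity and Laurent's theorem bounds the integer exponents $\vec m$, yielding finiteness. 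None of this machinery, nor any substitute for it, appears in your proposal, so the proof is incomplete at its essential point.
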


\medskip\noindent
The authors would like to express their sincere gratitude to Angus Macintyre and Dave Marker for their stimulating 
comments and help during a previous version of this work. 

\section{Linear relations in fields of finite transcendence degree}
All throughout this section, let $K$ be an algebraically closed subfield of $\C$ of 
finite transcendence degree $d$ containing $\pi$. Put $\Gamma:=\ex(K)$, a subgroup of $\C^\times$. We consider 
solutions in $\Gamma$ of 
\begin{equation*}\label{mainEQ}\tag{\textborn}
 \lambda_1 x_1+\cdots+\lambda_k x_k=1,
\end{equation*}
where $\lambda_1,\ldots,\lambda_k\in K$. We say that a solution $\vec\gamma=(\gamma_1,\dots,\gamma_k)$ in $\Gamma$ 
of (\ref{mainEQ}) is {\it non-degenerate} if $\sum\limits_{i\in I} \lambda_i \gamma_i\neq 0$ for every nonempty proper 
subset $I$ of $\{1,\ldots,k\}$. 

\medskip\noindent
We begin with some notations that will be useful in the rest of the paper.

\begin{definition}
Let $G$ be an abelian group, written multiplicatively and for $n>0$ put $G^{[n]}=\{g^n:g\in G\}$. We say that a 
subgroup $H$ is \emph{pure} in $G$ if $H\cap G^{[n]}=H^{[n]}$ for all $n>0$.  We say that $H$ is \emph{radical} in 
$G$ if it is pure in $G$ and it contains all the torsion elements of $G$.

\medskip\noindent
Given $A\subseteq G$, we set $\defh{A}_G$ to be the smallest radical subgroup of $G$ containing $A$. That is,
$$ \defh{A}_G =\{ g\in G\, |\,g^n \in [A]_G \text{ for some } n\in\N\}$$
where $[A]_G$ is the subgroup generated by $A$.  When $G$ is clear from the context, we will drop the subscripts 
and just write $\defh{A}$ and $[A]$. For instance throughout this section the ambient group is  $\C^\times$ 
unless explicitly stated otherwise.
\end{definition}

\medskip\noindent
Since $K$ is a field of characteristic $0$, it is easy to see that $\Gamma$ is divisible; 
in particular it is pure in $\C^\times$. Moreover $\Gamma$ is a radical subgroup of $\C^\times$
 since $\sqrt{-1}\pi$ is in $K$. Therefore, so is  $\Gamma\cap K^\times$.  

\medskip\noindent
Given $a_1,\dots,a_n$ in $\C$, by $\vec a$ we denote the tuple $(a_1,\ldots,a_n)$ and $\ex(\vec a)$ 
denotes $(\ex(a_1),\dots,\ex(a_n))$.  

\medskip\noindent
Note the following straight-forward consequence of Schanuel's Conjecture concerning the rank of $\Gamma\cap K^\times$.

\begin{lemma}\label{L:rk}{(Assuming Schanuel's conjecture)}\\
\indent The rank of $\Gamma\cap K^\times$ is at most $d$.
\end{lemma}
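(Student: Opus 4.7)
The plan is to reduce the claim directly to Schanuel's conjecture. Let $\gamma_1,\ldots,\gamma_m$ be $\Z$-linearly independent elements of $\Gamma\cap K^\times$; I aim to show $m\leq d$, which would imply the rank bound. Since $\Gamma=\ex(K)$, for each $i$ I can pick some preimage $a_i\in K$ with $\ex(a_i)=\gamma_i$.

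The first step is to upgrade multiplicative $\Z$-independence of the $\gamma_i$ to additive $\Q$-independence of the $a_i$. If $\sum q_i a_i=0$ for some $q_i\in\Q$ not all zero, then clearing denominators yields $\sum n_i a_i=0$ with $n_i\in\Z$ not all zero; applying $\ex$ produces $\prod \gamma_i^{n_i}=1$, contradicting the assumption. Hence $a_1,\ldots,a_m$ are $\Q$-linearly independent in $\C$.

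The second step is to invoke Schanuel's conjecture on the tuple $(a_1,\ldots,a_m)$, which yields
$$\tr_\Q(a_1,\ldots,a_m,\ex(a_1),\ldots,\ex(a_m))\geq m.$$
But by construction every $a_i$ lies in $K$ and every $\ex(a_i)=\gamma_i$ lies in $\Gamma\cap K^\times\subseteq K$, so the left-hand side is at most $\tr_\Q K=d$. Therefore $m\leq d$.

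There is no real obstacle here: once the definitions are unwound, the proof is essentially a one-line application of Schanuel's conjecture. The only point requiring a moment's thought is the first step above, namely that $\Z$-independence of the $\gamma_i$ transfers to $\Q$-independence of any lifts $a_i\in K$; this is what allows Schanuel's conjecture to be applied with the correct predimension input.
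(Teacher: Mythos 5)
Your proof is correct and is essentially the paper's own argument in contrapositive form: the paper takes $d+1$ preimages $\beta_i$ with $\ex(\beta_i)\in\Gamma\cap K^\times$, notes the transcendence degree is at most $d$, and concludes from Schanuel that the $\beta_i$ are $\Q$-dependent and hence the $\ex(\beta_i)$ multiplicatively dependent; you run the same computation starting from $m$ multiplicatively independent elements. The key observation in both versions is the transfer between multiplicative independence of the $\gamma_i$ and $\Q$-linear independence of their logarithms, which you handle correctly.
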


\begin{proof}
 Let $\beta_1,\ldots,\beta_{d+1}$ in $K$ such that $\ex(\vec\beta)$ is in $\Gamma\cap K^\times$. In particular, 
$$ \tr_\Q (\beta_1,\ldots,\beta_{d+1},\ex(\beta_1),\ldots,\ex(\beta_{d+1}))\leq d.$$
Thus Schanuel's conjecture implies that $\beta_1,\ldots,\beta_{d+1}$ are $\Q$-linearly dependent and 
hence $\ex(\beta_1),\ldots,\ex(\beta_{d+1})$ are multiplicative dependent.
\end{proof}

\noindent
On the basis of this lemma, take $\beta_1,\dots,\beta_t\in K$ where $t\leq d$ such that 
$\pi\sqrt{-1},\beta_1,\dots,$ $\beta_t$ are $\Q$-linearly independent and
$$\Gamma\cap K^\times=\<\ex(\beta_1),\dots,\ex(\beta_t)\>.$$

\noindent Recall Lemma 8.2 from \cite{vdDG}.

\begin{lemma}\label{T:Louayhan}
Let $F$ be a field with a subfield $E$ and subgroups $G,H$ of $F^\times$. Suppose also that $H$ is a 
radical subgroup of $G$. Then the following two conditions are equivalent:
\begin{enumerate}
 \item for every $\lambda_1,\ldots,\lambda_k\in E$, the equation (\ref{mainEQ}) has the same 
non-degenerate solutions in $H$ as in $G$.
\item whenever $g_1,\ldots,g_n$ in $G$ are multiplicatively independent over $H$, they are algebraically 
independent over the field $E(H)$. 
\end{enumerate}
\end{lemma}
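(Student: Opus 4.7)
The plan is to prove the two implications separately, using that $G/H$ is torsion-free (since $H$ is radical in $G$) and translating between algebraic relations over $E(H)$ and $E$-linear relations in $H \cdot \<g_1, \ldots, g_n\>$. For $(2) \Rightarrow (1)$, let $\vec\gamma \in G^k$ be a non-degenerate solution of (\ref{mainEQ}) and let $m$ be the rank of the subgroup of $G/H$ generated by $\bar\gamma_1, \ldots, \bar\gamma_k$. If $m = 0$, torsion-freeness forces each $\gamma_i \in H$. Otherwise I pick $\delta_1, \ldots, \delta_m \in G$ whose images form a $\Z$-basis of this subgroup; then $\gamma_i = h_i \prod_j \delta_j^{a_{i,j}}$ for uniquely determined $h_i \in H$ and $a_{i,j} \in \Z$, and the $\delta_j$ are multiplicatively independent over $H$, so by $(2)$ algebraically independent over $E(H)$. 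The Laurent monomials $\prod_j \delta_j^{a_j}$ are then $E(H)$-linearly independent, and grouping the identity $\sum_i \lambda_i h_i \prod_j \delta_j^{a_{i,j}} = 1$ by exponent vector $\vec a \in \Z^m$ yields $\sum_{i : \vec a_i = \vec 0} \lambda_i \gamma_i = 1$ and $\sum_{i : \vec a_i = \vec a} \lambda_i \gamma_i = 0$ for every $\vec a \neq \vec 0$. Non-degeneracy of $\vec\gamma$ then forces $\{i : \vec a_i = \vec a\}$ to be empty for every $\vec a \neq \vec 0$, whence every $\vec a_i = \vec 0$ and every $\gamma_i$ lies in $H$.

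For $(1) \Rightarrow (2)$, assume $g_1, \ldots, g_n \in G$ are multiplicatively independent over $H$ and suppose for contradiction that they are algebraically dependent over $E(H)$. Clearing denominators and expanding every coefficient as an $E$-linear combination of elements of $H$ converts a witnessing polynomial relation into a nontrivial identity $\sum_{i=1}^{k} \mu_i h_i g^{\vec b_i} = 0$ with $\mu_i \in E^\times$ and pairwise distinct terms $s_i := h_i g^{\vec b_i} \in H \cdot \<g_1, \ldots, g_n\>$. The main step is the following claim, to be proved by induction on $k$: for every such $E$-linear relation and every exponent $\vec b$, the partial sum $\sum_{i : \vec b_i = \vec b} \mu_i h_i$ vanishes. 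Granted the claim, each coefficient of the original polynomial relation vanishes, contradicting nontriviality. For the inductive step I divide by $-\mu_k s_k$ to produce an instance of (\ref{mainEQ}) of length $k-1$ with solution $x_i = s_i/s_k \in G$. If it is non-degenerate, hypothesis $(1)$ places every $x_i$ in $H$, so $g^{\vec b_i - \vec b_k} \in H$ and multiplicative independence forces all $\vec b_i$ to agree with $\vec b_k$, making the claim transparent. If it is degenerate, both the vanishing subsum and its complement multiply back to shorter relations of the same form; applying the induction hypothesis to each and adding the resulting partial vanishings recovers the claim for the original relation.

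The main obstacle is handling the degenerate case of $(1) \Rightarrow (2)$: a naive attempt to force all $\vec b_i$ to coincide fails, since a relation may genuinely split across several exponents. The right inductive statement is the single-exponent decomposition formulated above, whose proof hinges on both halves of a degenerate solution giving strictly shorter relations of the same form. The direction $(2) \Rightarrow (1)$ is then comparatively routine once torsion-freeness of $G/H$ allows one to work with an honest $\Z$-basis and reduce the linear equation to a Laurent-polynomial identity in algebraically independent variables.
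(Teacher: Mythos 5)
Your proof is correct. Note that the paper itself contains no proof of this statement---it is quoted verbatim as Lemma 8.2 of \cite{vdDG}---so there is nothing internal to compare against; your argument (deducing that $G/H$ is torsion-free from radicality and lifting a $\Z$-basis to reduce $(2)\Rightarrow(1)$ to comparing coefficients of Laurent monomials in algebraically independent elements, and for $(1)\Rightarrow(2)$ the induction on the length of an $E$-linear relation in which a degenerate solution splits into two strictly shorter relations whose partial sums add up) is complete and is the standard route to this equivalence.
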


\medskip\noindent
This allows us to prove the following.

\begin{proposition}\label{P:fterad}{(Assuming Schanuel's conjecture)}\\
\indent There exists a radical subgroup $\Gamma^*$ of $\Gamma$ of finite rank containing $\Gamma\cap K^\times$ 
such that for every $\lambda_1,\ldots,\lambda_k$ in $K$, the equation (\ref{mainEQ}) has the same non-degenerate 
solutions in $\Gamma^*$ as in $\Gamma$.
\end{proposition}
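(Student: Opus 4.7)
The plan is to construct $\Gamma^*$ via a finite iteration driven by Lemma~\ref{T:Louayhan}. I start with $\Gamma^*_0 := \Gamma \cap K^\times$, which is already radical in $\C^\times$ and has finite rank $t$ by Lemma~\ref{L:rk}. Inductively, suppose $\Gamma^*_j$ has been built as a radical subgroup of $\Gamma$ of finite rank containing $\Gamma\cap K^\times$. If $\Gamma^*_j$ fails condition~(2) of Lemma~\ref{T:Louayhan} (with $E=K$, $G=\Gamma$, $H=\Gamma^*_j$), pick witnesses $g_1,\ldots,g_{n_j}\in\Gamma$ that are multiplicatively independent over $\Gamma^*_j$ but algebraically dependent over $K(\Gamma^*_j)$, and set $\Gamma^*_{j+1}$ to be the radical subgroup of $\Gamma$ generated by $\Gamma^*_j\cup\{g_1,\ldots,g_{n_j}\}$. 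The multiplicative independence forces $\rk(\Gamma^*_{j+1})=\rk(\Gamma^*_j)+n_j$, while the algebraic dependence forces $\tr_K(\Gamma^*_{j+1})\leq \tr_K(\Gamma^*_j)+n_j-1$, so the invariant $\rk(\Gamma^*_j)-\tr_K(\Gamma^*_j)$ strictly increases at every step.

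The central input from Schanuel's conjecture is the uniform bound
$$\rk(\Gamma^*)-\tr_K(\Gamma^*)\leq d-1$$
valid for every radical subgroup $\Gamma^*$ of $\Gamma$ of finite rank containing $\Gamma\cap K^\times$. To prove it, extend $\beta_1,\ldots,\beta_t$ to elements $c_1,\ldots,c_s\in K$ (with $s=\rk(\Gamma^*)-t$) so that $\ex(\beta_1),\ldots,\ex(\beta_t),\ex(c_1),\ldots,\ex(c_s)$ radically generate $\Gamma^*$. Combining the radicality of $\Gamma^*$ with the $\Q$-linear independence of $\pi\sqrt{-1},\beta_1,\ldots,\beta_t$, one verifies that $\pi\sqrt{-1},\beta_1,\ldots,\beta_t,c_1,\ldots,c_s$ are $\Q$-linearly independent in $K$. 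Applying Schanuel to these $1+\rk(\Gamma^*)$ elements and noting that $\pi\sqrt{-1}$, the $\beta_i$, the $c_j$, $\ex(\pi\sqrt{-1})=-1$ and the $\ex(\beta_i)$ all lie in $K$, the resulting inequality collapses to $d+\tr_K(\Gamma^*)\geq 1+\rk(\Gamma^*)$, which is exactly the claimed bound.

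Combining the two observations, the inductive construction must stabilize after at most $d-1-t$ steps, producing a radical subgroup $\Gamma^*\subseteq\Gamma$ of finite rank containing $\Gamma\cap K^\times$ that satisfies condition~(2) of Lemma~\ref{T:Louayhan}, and therefore condition~(1), which is precisely the statement of the proposition. The delicate part of the argument is the $\Q$-linear independence of $\pi\sqrt{-1},\beta_1,\ldots,\beta_t,c_1,\ldots,c_s$: it relies on both the radicality of $\Gamma^*$ and on the fact that $\ex(\beta_1),\ldots,\ex(\beta_t)$ radically generate $\Gamma\cap K^\times$, and this is what sharpens Schanuel's conjecture enough to bound the rank-versus-transcendence-degree invariant by $d-1$. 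Once this step is handled, termination and the final appeal to Lemma~\ref{T:Louayhan} are formal.
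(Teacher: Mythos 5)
Your construction is correct and is essentially the paper's own proof: the same chain of radical subgroups of $\Gamma$ obtained by adjoining witnesses to the failure of condition (2) of Lemma \ref{T:Louayhan}, with termination forced by a Schanuel-based predimension count. The only cosmetic differences are that the paper seeds the chain with $\defh{(\Gamma\cap K^\times)\cup\ex(\vec\alpha)}$ for a transcendence basis $\vec\alpha$ of $K$ over $\Q(\vec\beta)$ and runs the bookkeeping on $\tr_\Q K(\Gamma_i)$ rather than isolating your invariant $\rk(\Gamma^*)-\tr_K(\Gamma^*)\leq d-1$, which amounts to the same count.
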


\begin{proof}

Fix a transcendence basis $\{\alpha_1,\dots,\alpha_s\}$ of $K$ over $\Q(\beta_1,\dots,\beta_t)$ and let 
 $e=\tr_\Q K(\ex(\vec\alpha))$. In particular, $\alpha_1,\dots,\alpha_s,\beta_1,\dots,\beta_t$ are 
$\Q$-linearly independent and  $d\leq e\leq d+s$.  

\smallskip\noindent
By the previous lemma, we only need to  construct a radical subgroup $\Gamma^*$ of $\Gamma$ of finite rank with the 
following property:

\medskip\noindent
{($*$)}  Let $\gamma_1,\dots,\gamma_m\in\Gamma$ be algebraically dependent over $K(\Gamma^*)$. Then they are 
multiplicatively dependent over $\Gamma^*$.

\medskip\noindent
We construct inductively an increasing chain of radical subgroups $\{\Gamma_{i}\}_{i\in\N}$ of $\Gamma$ of finite rank. Start 
with  
$$\Gamma_0:=\defh{(\Gamma\cap K^\times) \cup \ex(\vec \alpha)}.$$ 
For $i>0$, if there exist $b_i^{(1)},\dots,b_{i}^{(m_i)}$ in $K$ such that 
$\exp(\vec{b_{i}})$ is multiplicatively independent over 
$\Gamma_{i-1}$ but algebraically dependent over $K(\Gamma_{i-1})$, then let 
$\Gamma_i:=\<\Gamma_{i-1}\cup\{\exp(\vec{b_{i}})\}\>$; otherwise let $\Gamma_{i}:=\Gamma_{i-1}$. 
(Note that the construction of the chain is not unique as it depends on the choices of the $\vec{b_i}$'s.)

\medskip\noindent
We only need to show that the chain stabilises after $i=d-t$. Since at every step we add a tuple which is algebraically 
dependent over the previous one, we have that the transcendence degree of 
$K(\Gamma_{i})$ is at most $e+(\rk(\Gamma_{i})-s-t)-i$. In particular, 

$$\tr_\Q K(\Gamma_{d-t})\leq\rk (\Gamma_{d-t}).$$

Suppose there is some $\vec d$ in $ K^n$ such that $\exp(\vec d)$ is algebraically dependent over $K(\Gamma_{d-t})$.
 Then the transcendence degree
$$\tr_{\Q}(\vec\alpha,\vec\beta,\{\vec{b_i}\}_{1\leq i\leq d-t} ,\vec d,\exp(\vec\alpha),\exp(\vec\beta),
\{\exp(\vec{b_i})\}_{1\leq i\leq d-t},\exp(\vec d))$$ is at most $\tr_\Q(K(\Gamma_{d-t}))+n-1$ and hence strictly
 less than $\rk(\Gamma_{d-t})+n$. Schanuel's conjecture yields a $\Q$-dependence among 
$\vec\alpha,\vec\beta,\{\vec{b_i}\}_{1\leq i\leq d-t} ,\vec d$. By the construction of the chain, $\exp(\vec d)$ is multiplicatively dependent over 
$\Gamma_{d-t}$, showing that $\Gamma_{d-t}=\Gamma_{i}$ for $i\geq d-t$ and that ($*$) is satisfied with $\Gamma^*:=\Gamma_{d-t}$.
 
\end{proof}

\begin{remark} It follows from the proof above that if the rank of $\Gamma\cap K^\times$ is already $d$, then we can take 
$\Gamma^*$ to be $\Gamma\cap K^\times$.
\end{remark}

Let $\Gamma^*=\<\ex(a_1),\dots,\ex(a_s)\>$ with $a_1,\dots,a_s\in K$ linearly independent over $\sqrt{-1}\pi$. 

\medskip\noindent From now on, $\mathbb U$ denotes the multiplicative group of all roots of unity. Recall the 
following results. 

\begin{lemma}\label{extension}{(Lemma 6.1 in \cite{MP})} \\
\indent Let $E\subseteq F$ be fields such that $E\cap\mathbb U=F\cap\mathbb U$ and $G$ be a radical subgroup of $E^\times$.
Then for $\lambda_1,\dots,\lambda_n\in E^\times$, the equation (\ref{mainEQ}) 
has the same non-degenerate solutions in $G$ as in $\defh{G}_{F^\times}$.
\end{lemma}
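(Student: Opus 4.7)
The plan is to prove the nontrivial direction: any non-degenerate solution $(h_1,\ldots,h_k) \in \defh{G}_{F^\times}^k$ of (\ref{mainEQ}) must already lie in $G^k$. Pick $N$ with $h_i^N \in G \subseteq E^\times$ for every $i$, so each $h_i$ is algebraic over $E$ of degree dividing $N$. Replacing $F$ by the finite extension $E(h_1,\ldots,h_k)$ preserves the hypothesis (shrinking $F$ can only shrink $F \cap \mathbb U$), so we may assume $F/E$ is finite algebraic.

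The core step is to show each $h_i$ lies in $E$, and for this I would apply $E$-automorphisms $\sigma$ of a suitable Galois extension containing all the $h_i$, together with Artin's linear independence of characters. Set $\chi_i(\sigma) := \sigma(h_i)/h_i$, which is an $N$-th root of unity since $h_i^N \in E$ is fixed. The hypothesis $E \cap \mathbb U = F \cap \mathbb U$ is what converts $\chi_i$ from a mere cocycle into a genuine character: whenever $\sigma(h_i) \in F$ one has $\chi_i(\sigma) \in F \cap \mathbb U = E \cap \mathbb U$, and since $\sigma$ fixes $E$ pointwise, the cocycle relation $\chi_i(\sigma\tau) = \sigma(\chi_i(\tau))\chi_i(\sigma)$ collapses to $\chi_i(\sigma\tau) = \chi_i(\sigma)\chi_i(\tau)$. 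Partition $\{1,\ldots,k\}$ into classes where $i \sim j$ iff $\chi_i = \chi_j$, equivalently $h_i/h_j \in E^\times$; let $A_t := \sum_{i \in C_t} \lambda_i h_i$ and let $\chi_{(t)}$ be the corresponding distinct characters. Applying $\sigma$ to $\sum \lambda_i h_i = 1$ yields $\sum_t A_t \chi_{(t)}(\sigma) = 1$ for every $\sigma$. Non-degeneracy ensures $A_t \neq 0$ for each class, so Artin's theorem forces exactly one class whose character is trivial; hence every $h_i$ is Galois-fixed and lies in $E$.

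With $h_i \in E^\times$ and $h_i^N \in G$, purity of $G$ in $E^\times$ gives $g \in G$ with $h_i^N = g^N$; then $h_i/g$ is a root of unity in $E$, which lies in $G$ by radicality, so $h_i \in G$. The main obstacle is the Galois-theoretic step: one needs a group of automorphisms rich enough for its fixed field to be exactly $E$, while simultaneously compatible with the hypothesis so that the $\chi_i$ are honest homomorphisms. Handling the case where $F/E$ is not Galois --- since passing to a normal closure may enlarge $F \cap \mathbb U$ and threaten the hypothesis --- is the delicate point to be navigated, most likely by arranging the Galois setup so that only automorphisms restricting to $F$ (hence sending $\chi_i(\sigma)$ back into $F \cap \mathbb U$) are ever used.
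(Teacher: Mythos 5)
The paper does not actually prove this lemma---it is quoted from Lemma 6.1 of \cite{MP}---so I am judging your argument on its own merits. Your endgame is correct and correctly locates where radicality (as opposed to mere purity, cf.\ the Remark following the lemma) enters: once $h_i\in E^\times$ with $h_i^N\in G$, purity gives $g\in G$ with $h_i^N=g^N$, and then $h_i/g$ is a root of unity of $E^\times$, hence lies in $G$ because $G$ contains all torsion of $E^\times$. The reduction to $F/E$ finite is also fine.

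The gap is exactly the step you flag at the end, and it is not a technicality to be ``navigated''---it is the entire content of the lemma. The maps $\chi_i$ are cocycles, not characters: for $\sigma$ in the Galois group of a normal closure $M$ of $F/E$, the value $\chi_i(\sigma)=\sigma(h_i)/h_i$ is an $N$-th root of unity of $M$ that in general does not lie in $F$, so the hypothesis $E\cap\mathbb U=F\cap\mathbb U$ gives no hold on it and the cocycle relation does not collapse. Your proposed repair---use only automorphisms carrying $F$ into itself---fails: already for $E=\Q$ and $F=\Q(2^{1/3})$ (which satisfy $E\cap\mathbb U=F\cap\mathbb U$) there are no nontrivial such automorphisms, so ``fixed by all automorphisms used'' does not imply ``lies in $E$''; and on the full group $\mathrm{Gal}(M/\Q)\cong S_3$ with $M=\Q(2^{1/3},\omega)$, the map $\sigma\mapsto\sigma(2^{1/3})/2^{1/3}$ takes the value $\omega$ of order $3$, hence is provably not a homomorphism (any homomorphism from $S_3$ to an abelian group has image of exponent $2$). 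Artin's independence of characters therefore cannot be invoked. What is actually needed is the statement that elements of $F^\times$ having a power in $E^\times$ and lying in distinct cosets of $E^\times$ are linearly independent over $E$; under the hypothesis that $F$ has no roots of unity outside $E$ this is Kneser's theorem on radical extensions (its conditions on $p$-th roots of unity and on $1+\zeta_4$ are exactly guaranteed by $E\cap\mathbb U=F\cap\mathbb U$), and its proof is an induction prime by prime, not a character computation. With that result in hand, grouping the $h_i$ by cosets of $E^\times$ and using non-degeneracy to rule out vanishing subsums completes the proof along the lines you sketch.
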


\begin{lemma}\label{P:BaysZil}{(Proposition 2.2 (ii) in \cite{Bays-Zilber})} \\
Let $L$ be a finitely generated extension of $\Q(\mathbb U)$. Then the quotient group $L^\times/\mathbb U$ is a free abelian group.
\end{lemma}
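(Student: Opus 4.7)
My plan is to induct on the transcendence degree $n$ of $L$ over $\Q(\mathbb{U})$. Since $L$ has characteristic zero and contains $\mathbb{U}$, the torsion subgroup of $L^\times$ coincides with $\mathbb{U}$, so $L^\times/\mathbb{U}$ is torsion-free; the real work is to establish freeness. For the \emph{inductive step} $(n\geq 1)$, I would pick a transcendence basis $t_1,\ldots,t_n$ of $L/\Q(\mathbb{U})$ and let $L_0\subseteq L$ be the relative algebraic closure of $\Q(\mathbb{U})(t_1,\ldots,t_{n-1})$ inside $L$. Then $L_0$ is again a finitely generated extension of $\Q(\mathbb{U})$, of transcendence degree $n-1$, and $L$ is the function field of a unique smooth projective geometrically integral curve $C$ over $L_0$.

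The divisor map on $C$ yields an exact sequence
$$1\longrightarrow L_0^\times\longrightarrow L^\times\xrightarrow{\mathrm{div}}\mathrm{Prin}(C)\longrightarrow 0,$$
where $\mathrm{Prin}(C)$, as a subgroup of the free abelian group $\mathrm{Div}(C)=\bigoplus_P \Z$ of Weil divisors, is itself free abelian. Quotienting by $\mathbb{U}\subseteq L_0^\times$ gives
$$0\longrightarrow L_0^\times/\mathbb{U}\longrightarrow L^\times/\mathbb{U}\longrightarrow\mathrm{Prin}(C)\longrightarrow 0,$$
which splits because its right-hand group is free; by the inductive hypothesis $L_0^\times/\mathbb{U}$ is free, so $L^\times/\mathbb{U}$ is a direct sum of two free abelian groups and hence free.

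For the \emph{base case} $n=0$, $L$ is a finite extension of $\Q(\mathbb{U})$, which I may write as $L=K\cdot\Q(\mathbb{U})$ for some number field $K$. Let $\mathcal{O}$ denote the integral closure of $\Z$ in $L$, a Pr\"ufer domain arising as the direct limit of the Dedekind rings of integers $\mathcal{O}_{K(\zeta_m)}$. The divisor sequence
$$1\longrightarrow \mathcal{O}^\times\longrightarrow L^\times\longrightarrow\mathrm{Div}(\mathcal{O})\longrightarrow \mathrm{Cl}(\mathcal{O})\longrightarrow 0$$
has $\mathrm{Div}(\mathcal{O})$ free abelian on the height-one primes of $\mathcal{O}$. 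I would argue that $\mathrm{Cl}(\mathcal{O})$ vanishes in this cyclotomic limit, as each ideal class of $\mathcal{O}_{K(\zeta_m)}$ principalises in the Hilbert class field, which (after enlarging $K$ harmlessly by finitely many abelian steps) is absorbed into the cyclotomic tower. This embeds $L^\times/\mathcal{O}^\times$ as a subgroup of a free abelian group, and a limiting version of Dirichlet's unit theorem identifies $\mathcal{O}^\times/\mathbb{U}$ as free; combining the two gives freeness of $L^\times/\mathbb{U}$.

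The main obstacle is the base case: making rigorous the direct-limit analyses of the class group and of the unit group along the cyclotomic tower requires careful bookkeeping, in particular checking that the Hilbert class fields of the intermediate number subfields can indeed be absorbed into $L$. By contrast, the inductive step is essentially a direct application of the divisor theory of smooth projective curves and the projectivity of free abelian groups.
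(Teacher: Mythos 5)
Your inductive step is essentially correct and standard: since $L_0$ is taken to be relatively algebraically closed in $L$, the constant field of the curve $C$ is exactly $L_0$, so the kernel of the divisor map is $L_0^\times$; the group of principal divisors is a subgroup of the free abelian group $\mathrm{Div}(C)$, hence free, the sequence splits, and the problem reduces to algebraic extensions of $\Q(\mathbb U)$. (One also needs that $L_0$ is finitely generated over $\Q(\mathbb U)$, which holds because intermediate fields of finitely generated field extensions are finitely generated.)

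The base case, however, is where all the content of the lemma lies, and your argument for it rests on claims that are false, not merely unchecked. First, $L^\times/\mathcal O^\times$ does \emph{not} embed into a free abelian group, because $\mathrm{Div}(\mathcal O)$ is not free on the height-one primes: every rational prime $p$ is infinitely ramified up the cyclotomic tower, with $(p)=(1-\zeta_{p^n})^{\varphi(p^n)}$ in $\Z[\zeta_{p^n}]$, so the nonzero class of $p$ in $L^\times/\mathcal O^\times$ is divisible by $\varphi(p^n)$ for every $n$; a nonzero element of a free abelian group is divisible by only finitely many integers. Equivalently, the value groups of the extensions of $v_p$ to $L$ are non-discrete subgroups of $\Q$, so your ``divisor group'' is a direct limit of copies of $\bigoplus\Z$ along multiplication-by-ramification-index maps, which is far from free. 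Second, $\mathrm{Cl}(\mathcal O)$ need not vanish: by Kronecker--Weber the Hilbert class field of $K(\zeta_m)$ is generally not contained in $K(\mathbb U)$ (it is not abelian over $\Q$), you cannot ``enlarge $K$'' because $L$ is a fixed field, and Iwasawa theory shows the direct limit of the $p$-parts of class groups up the cyclotomic tower is typically a nontrivial divisible group. Note that the infinite divisibility exhibited above is by units that are not roots of unity, which is precisely why $L^\times/\mathbb U$ can be free even though $L^\times/\mathcal O^\times$ is not: any correct proof must split off $\mathbb U$ itself rather than the full unit group. The usual route (and essentially the one behind the cited Proposition 2.2 of Bays--Zilber) is Pontryagin's criterion: $L^\times/\mathbb U$ is countable and torsion-free, so freeness is equivalent to every finite-rank subgroup being finitely generated, and that is a Kummer-theoretic finiteness statement over cyclotomic fields which is entirely absent from your sketch.
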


\begin{remark}
In \cite{MP}, the statement of Lemma \ref{extension} is not quite correct.  There $G$ is taken to be a pure subgroup of $E^\times$, which is not enough to get the conclusion. However, it is easy to see that the proof there proves the lemma as stated here.
\end{remark}

We can now reduce our situation from $K$ to any subfield $L$  that is finitely generated over $\Q(\mathbb U)$ containing
the generators $\exp(\vec a)$. 
\begin{lemma}\label{finitelygeneratedoverU}
 Let $L$ be a finitely generated extension of $\Q(\mathbb U)$ containing $\exp(\vec a)$.  
Then there are $c_1,\dots,c_{t'}$ in $K$ linearly independent over $\sqrt{-1}\pi$ such that for every 
$\lambda_1,\dots,\lambda_k\in L$, all the nondegenerate 
solutions of (\ref{mainEQ}) in $\Gamma^*$ are in $\mathbb U\cdot[\exp(\vec c)]$.
\end{lemma}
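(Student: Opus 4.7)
The plan is to combine Lemma~\ref{extension} with Lemma~\ref{P:BaysZil}. Set $H := \Gamma^* \cap L^\times$. The key claim is that every non-degenerate solution in $\Gamma^*$ of $(\ref{mainEQ})$ with coefficients in $L$ already has all coordinates in $H$. Once that is established, $H/\mathbb{U}$ embeds into the free abelian group $L^\times/\mathbb{U}$ provided by Lemma~\ref{P:BaysZil} and has finite rank bounded by $\rk \Gamma^* = s$; hence $H/\mathbb{U}$ is free abelian of some rank $t'\leq s$. Lifting a $\Z$-basis to elements $\exp(c_1),\dots,\exp(c_{t'})\in H$ with $c_i\in K$ (possible since $H\subseteq \Gamma = \exp(K)$) gives $H = \mathbb{U}\cdot[\exp(\vec c)]$, and any nontrivial $\Q$-relation $m_0\sqrt{-1}\pi + \sum m_i c_i = 0$ would force $\prod \exp(c_i)^{m_i}\in\mathbb{U}$, contradicting the basis property. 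This yields the required linear independence of the $c_i$ over $\sqrt{-1}\pi$.

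To prove the key claim, I apply Lemma~\ref{extension} with $E = L$, $F = \C$ and $G = H$. The hypothesis $E\cap\mathbb{U} = F\cap\mathbb{U}$ is automatic because $\mathbb{U}\subseteq \Q(\mathbb{U})\subseteq L$. The crucial step is to upgrade the radicality of $\Gamma^*$ from $\Gamma$ to $\C^\times$: the group $\Gamma = \exp(K)$ is itself radical in $\C^\times$, being divisible and containing $\mathbb{U}$ (as $\pi\in K$), and radicality composes. Consequently $\Gamma^*$ is pure in $\C^\times$, from which purity of $H$ in $L^\times$ follows (an $n$-th root $l\in L^\times$ of $h\in H$ differs from an element of $\Gamma^*$ by a root of unity, which lies in $L^\times\cap\Gamma^* = H$), and $\mathbb{U}\subseteq H$ handles torsion; so $H$ is radical in $L^\times$. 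Finally the identification $\langle H\rangle_{\C^\times} = \Gamma^*$ holds: the inclusion $\subseteq$ is immediate from $H\subseteq\Gamma^*$ together with radicality of $\Gamma^*$ in $\C^\times$, while the reverse uses that $\exp(\vec a)\subseteq L\cap\Gamma^* = H$ and $\Gamma^* = \langle\exp(\vec a)\rangle_{\C^\times}$ by construction. Lemma~\ref{extension} now delivers the claim.

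The main obstacle is precisely this bookkeeping: one must carefully transfer the radicality of $\Gamma^*$ from $\Gamma$ up to $\C^\times$, verify that the restriction $H = \Gamma^*\cap L^\times$ is radical in $L^\times$, and confirm that its $\C^\times$-radical closure recovers the full group $\Gamma^*$. With these technical checks in place, Lemma~\ref{extension} forces non-degenerate solutions into $H$, and Lemma~\ref{P:BaysZil} then produces the $c_i$ essentially for free.
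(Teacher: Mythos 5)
Your proof is correct and takes essentially the same route as the paper: apply Lemma~\ref{extension} with $E=L$, $F=\C$ and $G$ the part of $\Gamma^*$ lying in $L^\times$ (the paper takes $G=\defh{\exp(\vec a)}_{L^\times}$, which coincides with your $H=\Gamma^*\cap L^\times$ since $\Gamma^*=\defh{\exp(\vec a)}_{\C^\times}$), and then split off $\mathbb U$ and invoke Lemma~\ref{P:BaysZil} to obtain the finitely generated free complement $[\exp(\vec c)]$. Your verification that $H$ is radical in $L^\times$ and that $\defh{H}_{\C^\times}=\Gamma^*$ is just a more detailed account of what the paper asserts without comment.
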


\begin{proof}
In Lemma \ref{extension}, take $L$, $\C$ and $\defh{\exp(\vec{a})}_{L^\times}$ in the place of  $E,F$ and $G$.  
Note that then $\defh{G}_{F^\times}$ is nothing other than $\Gamma^*$.  

\medskip\noindent
Being a divisible group, $\mathbb U$ splits in $\defh{\exp(\vec{a})}_{L^\times}$ and consider its complement $G'$.
Lemma \ref{P:BaysZil} yields that $G'$ is a subgroup of finite rank of a free 
commutative group, therefore   $G'$ is finitely generated, say 
$G'=[\exp(\vec c)]$, with $c_1,\dots,c_{t'}\in K$. We may clearly assume that $\vec c$ is  
linearly independent over $\sqrt{-1}\pi$.

Therefore the possible nondegenerate solutions in $\Gamma^*$ 
of (\ref{mainEQ}) with  $\lambda_1,\dots,\lambda_k\in L$ are of the form
$$(\zeta_1\exp(\vec{m}_1\cdot \vec{c}),\dots,\zeta_k\exp(\vec{m}_k\cdot \vec{c}))$$
for some integer tuples $\vec{m}_1,\dots,\vec{m}_k$ and roots of unity $\zeta_1,\dots,\zeta_k$.
\end{proof}

\section{Specializations and reduction to a number field}

We first remark the following easy observation, whose proof follows the lines of the proof of Lemme 4 of \cite{LaurentII}
(Note that Laurent considered only finitely generated 
$\Q$-algebras, however his result is deeper).

\begin{lemma}\label{LinInd_spe}
Let $R$ be a subring of $\bar\Q[S]$, where $S$ is a finite subset of $\C$. Suppose that $b_1,\dots,b_q$ 
are elements of $R$ and let $q'$ be the linear dimension over $\bar\Q$ of $\vec b$.  
Then there are ring homomorphisms $\phi_1,\dots,\phi_{q'}$ from $R$ to $\bar\Q$ fixing $k:=R\cap\bar\Q$ such 
that for every $\alpha_1,\dots\alpha_{q}$ in $k$ with $\alpha_1 b_1+\cdots+\alpha_q b_q\neq 0$ there is
 some $i\in\{1,\dots,q'\}$  with $\phi_i(\alpha_1 b_1+\cdots+\alpha_q b_q)\neq 0$.
\end{lemma}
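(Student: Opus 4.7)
The plan is to realise the required homomorphisms as restrictions to $R$ of $\bar\Q$-algebra evaluations on the ambient ring $\bar\Q[S]$. Since $\bar\Q[S]\subseteq\C$ is a finitely generated $\bar\Q$-algebra that is also a domain, the spectrum $X:=\Spec\bar\Q[S]$ is an irreducible affine $\bar\Q$-variety. Because $\bar\Q$ is algebraically closed, Hilbert's Nullstellensatz ensures that the $\bar\Q$-points of $X$ are Zariski dense. Concretely: for any nonzero $f\in\bar\Q[S]$ there is a $\bar\Q$-algebra homomorphism $\psi:\bar\Q[S]\to\bar\Q$ with $\psi(f)\neq 0$.

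Let $V\subseteq\bar\Q[S]$ denote the $\bar\Q$-linear span of $b_1,\ldots,b_q$, so that $\dim_{\bar\Q} V=q'$. I will build $\psi_1,\ldots,\psi_{q'}:\bar\Q[S]\to\bar\Q$ inductively so that the restrictions $\psi_i|_V$ form a basis of the dual $V^*$. Having produced $\psi_1,\ldots,\psi_j$ with $j<q'$, the common kernel in $V$ of $\psi_1|_V,\ldots,\psi_j|_V$ has $\bar\Q$-dimension $q'-j\geq 1$; choosing a nonzero element $v$ of this kernel and, by the separation statement, a homomorphism $\psi_{j+1}$ with $\psi_{j+1}(v)\neq 0$, forces $\psi_{j+1}|_V$ to lie outside the span of the previous restrictions. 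After $q'$ steps this produces the promised basis of $V^*$.

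Now set $\phi_i:=\psi_i|_R:R\to\bar\Q$. Each $\phi_i$ is a ring homomorphism that fixes $k=R\cap\bar\Q$, since $\psi_i$ is a $\bar\Q$-algebra homomorphism and therefore fixes $\bar\Q$ pointwise. Given $\alpha_1,\ldots,\alpha_q\in k$ with $b:=\alpha_1 b_1+\cdots+\alpha_q b_q\neq 0$, note that $b$ is an element of $V$ (as a $k$-linear combination of the $b_j$, it is a fortiori a $\bar\Q$-linear combination). Since $\psi_1|_V,\ldots,\psi_{q'}|_V$ span $V^*$, they jointly separate points of $V$, and hence $\phi_i(b)=\psi_i(b)\neq 0$ for some index $i$.

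The main obstacle is the separation statement in the first paragraph: one must know that $\bar\Q$-valued points of $\Spec\bar\Q[S]$ are abundant enough to detect every nonzero regular function. This is exactly what the Nullstellensatz over the algebraically closed field $\bar\Q$ provides for the finitely generated $\bar\Q$-algebra $\bar\Q[S]$. Once this ingredient is in place, the rest is elementary linear algebra on $V$ and its dual.
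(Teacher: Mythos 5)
Your proof is correct and follows essentially the same route as the paper's: an inductive construction of the specializations, applying the Nullstellensatz after inverting the element one needs to keep nonzero at each step. The only cosmetic difference is that you phrase the induction as building a dual basis of $V^*$, while the paper tracks the nonvanishing of the determinant $\det(\phi_i(b_j))$ --- the element $D'_q$ it inverts is exactly a nonzero vector in the common kernel of the previously constructed functionals, so the two arguments coincide.
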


\begin{proof}
 After changing $R$, we may assume that $q=q'$. It suffices then to find  $\phi_i:R\to \bar\Q$ for each 
 $i\in\{1,\dots,q\}$ fixing $k$ such that the determinant
$$D_q:=\left| \begin{array}{ccc} \phi_1(b_1) & \cdots & \phi_1(b_q) \\ \vdots & \  & \vdots \\ \phi_q(b_1) & \cdots 
& \phi_q(b_q)\end{array} \right| $$
is nonzero.

\medskip\noindent
We proceed by induction on $q$. For $q=1$ by Nullstellensatz take a ring homomorphism $R[b_1^{-1}]\to \bar\Q$ that fixes 
$k$. Clearly, its restriction to $R$ sends $b_1$ to some non-zero element. 

\medskip\noindent
Assume now that  $\phi_1,\dots,\phi_{q-1}$ have been already constructed such that $D_{q-1}\neq 0$. Then the determinant
$$D'_q:=\left| \begin{array}{ccc} \phi_1(b_1) & \cdots & \phi_1(b_q) \\ \vdots & \  & \vdots \\ \phi_{q-1}(b_1) & \cdots 
& \phi_{q-1}(b_q) \\  b_1 & \cdots & b_q\end{array} \right| $$
is $\beta_1 b_1+\cdots+\beta_q b_q$, where $\beta_1,\dots,\beta_q$ are algebraic numbers. In particular, by induction,
 $\beta_q=D_{q-1}\neq 0$. Therefore, since we are assuming that the tuple $\vec b$ is $\bar \Q$-linearly independent, we conclude 
that $D'_q\neq 0$. Consider $$R':=R[ (D'_q)^{-1}].$$
Nullstellensatz implies that there is a ring morphism $\phi_q$ from $R'$ to $\bar\Q$ fixing $k':=R'\cap\bar\Q$. 
Its restriction to $R$ has the property that $\phi_q D'_q\neq 0$ which implies that $D_q\neq 0$.
\end{proof}

In order to reduce our setting to a number field in the last section, we need to carefully choose a specialization 
to $\bar\Q$. This is ensured by the density of closed points in specific subsets of the spectrum of 
any finitely generated $\Q$-algebra $R$. Given such $R$ and a polynomial $Q$ over $R$ irreducible in 
$\mathrm{Frac}(R)[X]$, denote by $\Omega(Q)$ the collection of prime ideals $\mathfrak{p}$ of $R$ such 
$Q$ mod $\mathfrak{p}$ has the same degree as $Q$ and it is irreducible as a polynomial over
 $\mathrm{Frac}(R/\mathfrak{p})$. Recall that a \emph{Hilbert set} $\Omega$ is a subset of $\Spec(R)$ which
 contains a finite intersection of non-empty open sets and sets of the form $\Omega(Q)$. 

\begin{fact}\label{F:Hilbert} Let $R$ be a finitely generated $\Q$-algebra.
\begin{itemize} 
 \item[(i)] Given a finitely generated subgroup $G$ of $R^\times$, there is a Hilbert set
 $\Omega$ such that the residue map $ G\to (R/\mathfrak{p})^\times$ is injective for every $\mathfrak{p}$ in $\Omega$. 
\item[(ii)] For any Hilbert set $\Omega$ in $R$, the collection of maximal ideals contained in $\Omega$ 
is dense in $\Spec(R)$. 
\end{itemize}
\end{fact}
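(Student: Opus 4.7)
The plan is to handle part (ii) as a scheme-theoretic form of Hilbert's irreducibility theorem and part (i) via a Kummer-theoretic construction combined with open conditions.

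For (ii), since $R$ is a finitely generated $\Q$-algebra, its fraction field is Hilbertian. Therefore, for each irreducible polynomial $Q_i \in \mathrm{Frac}(R)[X]$ defining the Hilbert set, the locus $\Omega(Q_i)$ meets the closed points of $\Spec R$ in a Zariski-dense subset; taking a finite intersection with the nonempty Zariski-open part preserves density of the maximal-ideal subset in $\Spec R$.

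For (i), decompose $G = T \times F$ with $T$ the finite torsion subgroup and $F \cong \Z^r$ free on generators $g_1, \ldots, g_r$. Torsion injectivity is ensured by the open condition $U := \bigcap_{\zeta \in T \setminus \{1\}} D(\zeta - 1)$, which is nonempty since $\zeta - 1 \neq 0$ in $R$ for each such $\zeta$. For injectivity on the free part, pick a prime $\ell > |T|$ and, after harmlessly enlarging $R$ to contain a primitive $\ell$-th root of unity, form the Kummer extension $\mathrm{Frac}(R)(\sqrt[\ell]{g_1}, \ldots, \sqrt[\ell]{g_r})$ of degree $\ell^r$ with Galois group $(\Z/\ell)^r$ (of full degree because of the $\Z$-independence of $g_1, \ldots, g_r$). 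A primitive element yields an irreducible polynomial $Q \in R[X]$, and $\Omega(Q)$ parametrizes those $\mathfrak{p}$ for which the residual Galois group is still $(\Z/\ell)^r$, equivalently, for which $g_1,\ldots,g_r$ remain multiplicatively independent modulo $\ell$-th powers in $(R/\mathfrak{p})^\times$.

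The main obstacle is to promote independence modulo $\ell$-th powers to full injectivity of the reduction on $F$, since a priori a nontrivial kernel element $g_1^{a_1} \cdots g_r^{a_r} \equiv 1 \pmod{\mathfrak{p}}$ merely forces $\ell \mid a_i$. I would address this by bootstrapping with two additional Hilbert-set pieces: a higher-level Kummer condition from $\mathrm{Frac}(R)(\sqrt[\ell^k]{g_i})$ for a suitably large $k$ (pushing divisibility to $\ell^k \mid a_i$), together with a cyclotomic condition of the form $\Omega(\Phi_{\ell^k})$ ensuring that no extra $\ell$-power torsion appears in the residue field's multiplicative group. Combining these conditions with $U$ then produces the required Hilbert set $\Omega$; the delicate technical point is verifying that finitely many such Hilbert-set conditions genuinely suffice to rule out every kernel element, i.e., that the descent terminates within a bounded number of levels once the residual $\ell$-torsion has been controlled.
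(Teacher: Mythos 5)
The paper offers no proof of this statement: it is labelled a \emph{Fact} and used as a black box (it is, in essence, the density of closed points in Hilbert subsets of $\Spec(R)$ together with the $\mathbb{G}_m^r$-case of N\'eron's specialization theorem, cf.\ Serre's treatment of thin sets, or Laurent's Lemme 4). So there is no in-paper argument to compare yours with; I can only assess your proposal on its own terms. Your part (ii) is fine as a sketch: finitely generated extensions of $\Q$ are Hilbertian, and via Noether normalization every nonempty open of $\Spec(R)$ meets $\Omega$ in a maximal ideal.

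Part (i), however, contains a genuine gap --- the very one you flag --- and your proposed repair does not close it. A Kummer condition at level $\ell^k$ for a fixed ``suitably large $k$'' only forces $\ell^k\mid a_i$ for a kernel relation $\prod \bar g_i^{a_i}=1$; no finite $k$ rules out $\vec a\neq 0$, so the descent must not terminate --- it must run forever, and that is what kills $\vec a$. The correct closing stays at level one: pick a prime $\ell$ with no primitive $\ell$-th root of unity in $\mathrm{Frac}(R)$ and impose two Hilbert-set conditions, (a) the $\bar g_i$ remain independent modulo $\ell$-th powers in $F:=R/\mathfrak p$ (encoded by $\Omega(Q)$ for $Q$ the minimal polynomial of a primitive element of $\mathrm{Frac}(R)(\zeta_\ell,g_1^{1/\ell},\dots,g_r^{1/\ell})$), and (b) $F$ contains no primitive $\ell$-th root of unity (encoded by $\Omega(Q')$ for the irreducible factors $Q'$ of $\Phi_\ell$ over $\mathrm{Frac}(R)$, all of degree $>1$). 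Then $\prod\bar g_i^{a_i}=1$ gives $\ell\mid a_i$ by (a); writing $a_i=\ell b_i$, the element $\prod\bar g_i^{b_i}$ is an $\ell$-th root of unity of $F$, hence equals $1$ by (b), and the same relation recurs for $\vec b$; by induction $\ell^k\mid a_i$ for every $k$, so $\vec a=0$. Two further points: your step of ``harmlessly enlarging $R$ to contain $\zeta_\ell$'' is not harmless --- it places $\zeta_\ell$ in every residue field and destroys exactly the torsion control (b) on which the descent depends (adjoin $\zeta_\ell$ only at the level of the field extension, not to $R$); and injectivity on $T$ and on the free part separately must still be combined to handle relations $\zeta\cdot\prod\bar g_i^{a_i}=1$ with $\zeta\in T$, which one does by raising to the power $|T|$.
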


\medskip\noindent
Combining the above with the proof of Lemma \ref{LinInd_spe}, one obtains the following result.

\begin{lemma}\label{lemme4laurent}{(Lemme 4 in \cite{LaurentII}\!)}\\
 Let $R$ be a finitely generated $\Q$-algebra with largest subfield $k$ and $G$ a finitely generated subgroup of 
$R^\times$. Suppose also that $b_1,\dots,b_q$ are elements of $R$ that generate a $\bar\Q$-linear space of dimension 
$q'$. Then there are ring homomorphisms $\phi_1,\dots,\phi_{q'}$ from $R$ into $\bar\Q$ such that each $\phi_i$ is 
injective on $G$ and that for every $\alpha_1,\dots\alpha_{q}\in k$ with $\alpha_1 b_1+\cdots+\alpha_q b_q\neq 0$,
 there is $i\in\{1,\dots,q'\}$ with $$\phi_i(\alpha_1 b_1+\cdots+\alpha_q b_q)\neq 0.$$
\end{lemma}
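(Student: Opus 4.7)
The plan is to reproduce the inductive construction of Lemma \ref{LinInd_spe} while imposing at each step the extra requirement that the constructed ring homomorphism be injective on $G$. The new ingredient is Fact \ref{F:Hilbert}: part (i) produces, for any finitely generated $\Q$-subalgebra of the ambient ring, a Hilbert set of prime ideals along which the finitely generated group $G$ injects into the residue ring, and part (ii) ensures that the maximal ideals inside this Hilbert set meet every non-empty Zariski-open subset.

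Fix once and for all an embedding $\iota : k \hookrightarrow \bar\Q$ to which every $\phi_i$ will be required to restrict, and reduce to the case $q=q'$ exactly as in Lemma \ref{LinInd_spe}. I induct on $i$. Assuming $\phi_1,\dots,\phi_{i-1}$ are already at hand with $D_{i-1}\neq 0$, consider the determinant
\[
D_i' := \left| \begin{array}{ccc} \phi_1(b_1) & \cdots & \phi_1(b_i) \\ \vdots & \  & \vdots \\ \phi_{i-1}(b_1) & \cdots & \phi_{i-1}(b_i) \\ b_1 & \cdots & b_i \end{array} \right|;
\]
expansion along the last row exhibits it as $\beta_1 b_1+\cdots+\beta_i b_i$ with coefficients $\beta_j\in\bar\Q$ and leading coefficient $\beta_i = D_{i-1}\neq 0$. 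Adjoining all the algebraic numbers $\phi_j(b_k)$ for $j<i$, $k\leq q$ (which include the $\beta_j$) to $R$ yields a still finitely generated $\Q$-algebra $R^{(i)}$ containing $G$, in which $D_i'\neq 0$ by the $\bar\Q$-linear independence of the $b_j$'s.

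Now apply Fact \ref{F:Hilbert}(i) to $R^{(i)}$ and $G\subseteq (R^{(i)})^\times$ to obtain a Hilbert set $\Omega^{(i)}\subseteq \Spec(R^{(i)})$, and invoke Fact \ref{F:Hilbert}(ii) to extract a maximal ideal $\mathfrak{m}_i \in \Omega^{(i)} \cap D(D_i')$. Since $R^{(i)}$ is a finitely generated $\Q$-algebra, $R^{(i)}/\mathfrak{m}_i$ is a number field, and one can embed it into $\bar\Q$ so that the composition $R^{(i)}\to \bar\Q$ restricts to $\iota$ on $k$ and to the canonical identification on every previously introduced $\phi_j(b_k)$; this is achieved by composing with a suitable element of $\mathrm{Aut}(\bar\Q/\Q)$, since both $\iota(k)$ and these $\phi_j(b_k)$ lie in a single finite extension of $\Q$. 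The resulting $\phi_i^{(i)}:R^{(i)}\to \bar\Q$ is injective on $G$ and satisfies $\phi_i^{(i)}(D_i') = D_i\neq 0$, so its restriction $\phi_i := \phi_i^{(i)}|_R$ completes the induction.

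The main technical point to watch is the enlargement to $R^{(i)}$ combined with the coherent choice of residue-field embeddings: Fact \ref{F:Hilbert} applies only to finitely generated $\Q$-algebras, while the determinants $D_i'$ naturally live in a ring slightly larger than $R$. Keeping this bookkeeping straight so that successive $\phi_i$'s agree on the algebraic data that have already been fixed is the one subtlety; once organised, the induction is routine.
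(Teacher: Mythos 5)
Your proof is correct and takes essentially the same route the paper intends: the paper's own ``proof'' is the single remark that the lemma follows by combining Fact \ref{F:Hilbert} with the proof of Lemma \ref{LinInd_spe}, and your argument is precisely that combination, carried out with the right care (picking at each step a maximal ideal in the Hilbert set lying in $D(D_i')$, and normalising the residue embeddings via an automorphism of $\bar\Q$ so that all the $\phi_i$ agree on $k$ and on the previously fixed algebraic constants, which is what makes the determinant expansion $\phi_i(D_i')=D_i$ and the final linear-algebra step go through).
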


In order to bound the degrees of the roots of unity appearing in Lemma \ref{finitelygeneratedoverU} we will need  
the following result.

\begin{theorem}\label{dvornicich-zannier}{(Theorem 1 in \cite{dvornicich-Zannier}\!)}\\
 Let $F$ be a number field, $a_0,a_1,\dots,a_k$ in $F$ and $\zeta$ a root of unity of order $Q$ such that 
$a_0+\sum\limits_{j=1}^{k}a_j\zeta^{n_j}=0$ with $\gcd(Q,n_1,\dots,n_k)=1$.  Let $\delta=[F\cap\Q(\zeta):\Q]$ 
and suppose that for any nonempty proper subset $I$ of $\{0,1,\dots,k\}$ the sum 
$\sum\limits_{j\in I}a_j\zeta^{n_j}\neq 0$. Then for each prime $p$ and $n>0$, if $p^{n+1}|Q$, 
then $p^{n}|2\delta$ and
 $$k\geq \dim_F(F+F\zeta^{n_1}+\cdots+F\zeta^{n_k})\geq 1+\sum_{p|Q,p^2\nmid  Q}[\frac{p-1}{\gcd(\delta,p-1)}-1].$$
In particular, the order $Q$ of $\zeta$ is bounded by a constant depending on $k$ and $\delta$ (and therefore $[F:\Q]$).
\end{theorem}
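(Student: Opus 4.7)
The plan is to exploit the Galois action of the cyclotomic extension $F(\zeta)/F$. Identify $\operatorname{Gal}(F(\zeta)/F)$ with an index-$\delta$ subgroup $H$ of $(\Z/Q\Z)^\times \cong \operatorname{Gal}(\Q(\zeta)/\Q)$. Applying each $\sigma \in H$ to the hypothesised vanishing relation $a_0 + \sum_j a_j \zeta^{n_j} = 0$ produces a family of conjugate relations $a_0 + \sum_j a_j \zeta^{m_\sigma n_j} = 0$, where $\sigma(\zeta) = \zeta^{m_\sigma}$. Each such conjugate relation is again non-degenerate, since the $a_j$ lie in $F$ and non-degeneracy only depends on which subsums vanish.

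To prove the divisibility $p^n \mid 2\delta$ when $p^{n+1} \mid Q$, I would argue by contradiction: suppose $p^n \nmid 2\delta$. Then $H$ must contain an element $\sigma$ acting as $\zeta \mapsto \zeta^{1+p^n u}$ for some unit $u$ modulo $p$ (the factor of $2$ accommodates $p=2$, where the element $-1$ plays a special role in the cyclotomic Galois group). Subtracting the $\sigma$-conjugate of the relation from the original cancels the constant term $a_0$ and leaves a vanishing sum $\sum_j a_j \zeta^{n_j}(1 - \zeta^{p^n u n_j}) = 0$ with at most $k$ nonzero terms. Because $\gcd(Q, n_1, \dots, n_k) = 1$, at least one factor $1 - \zeta^{p^n u n_j}$ is nonzero, so a minimality argument on the length of such derived relations forces a proper subsum of the original to vanish, contradicting non-degeneracy.

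For the dimension inequality, the strategy is to localise at each prime $p$ with $p \parallel Q$. Writing $\zeta = \zeta_p \cdot \zeta'$ with $\zeta'$ of order coprime to $p$ and grouping the exponents $n_j$ by their residues modulo $p$, one invokes the cyclic subgroup of $H$ of order $(p-1)/\gcd(\delta, p-1)$ that fixes $\zeta'$ but acts non-trivially on $\zeta_p$. Non-degeneracy forces the orbits of this subgroup on the terms $\zeta^{n_j}$ to be supported on disjoint sets of indices, each contributing an $F$-linearly independent family of vectors to $F + \sum_j F \zeta^{n_j}$ of the required size. Summing the contribution $(p-1)/\gcd(\delta, p-1) - 1$ over the primes $p \parallel Q$ and adding $1$ for the term corresponding to $a_0$ yields the stated lower bound.

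The main obstacle I anticipate is tracking non-degeneracy across the Galois manipulations: subtracting a conjugate relation from the original may introduce cancellations that destroy non-degeneracy in unexpected ways, so one cannot simply iterate the subtraction trick blindly. The delicate technical point is to choose the conjugations so that the resulting relation is either a strictly shorter \emph{non-degenerate} relation — enabling induction on $k$ or on $Q$ — or immediately yields the desired divisibility contradiction. A secondary subtlety is handling $p = 2$ uniformly with the odd primes, which is precisely what accounts for $2\delta$ rather than $\delta$ in the divisibility conclusion.
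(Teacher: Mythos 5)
First, note that the paper does not prove this statement at all: it is imported verbatim as Theorem~1 of Dvornicich and Zannier, so there is no internal proof to compare against. Your strategy --- identifying $\operatorname{Gal}(F(\zeta)/F)$ with an index-$\delta$ subgroup $H$ of $(\Z/Q\Z)^\times$ and playing conjugate relations against non-degeneracy --- is indeed the strategy of the cited proof, but as written your sketch has gaps exactly at the points where the theorem is hard.

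The main gap is the step ``a minimality argument on the length of such derived relations forces a proper subsum of the original to vanish.'' After subtracting the $\sigma$-conjugate you obtain $\sum_j a_j\zeta^{n_j}(1-\zeta^{cn_j})=0$, a vanishing sum of up to $2k$ roots of unity whose coefficients $a_j(1-\zeta^{cn_j})$ no longer lie in $F$, which need not be non-degenerate, and whose vanishing subsums bear no evident relation to subsums of the original relation; so there is no induction hypothesis available to apply to it. The mechanism that actually works (and is what Dvornicich--Zannier do) is different: factor $\zeta=\zeta_p\zeta'$ with $\zeta_p$ of $p$-power order, group the terms $a_j\zeta^{n_j}$ according to their $\zeta_p$-component, and observe that if $p^{n+1}\mid Q$ but $p^{n}\nmid 2\delta$ then $[F(\zeta):F(\zeta')]$ is large enough that the distinct powers of $\zeta_p$ occurring are linearly independent over $F(\zeta')$; hence each group vanishes separately, and $\gcd(Q,n_1,\dots,n_k)=1$ forces at least two nonempty groups, contradicting non-degeneracy. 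Relatedly, your existence claim for $\sigma$ with $\sigma(\zeta)=\zeta^{1+p^nu}$ is not backed by the needed index computation: what $p^n\nmid 2\delta$ actually yields, via the structure of $(\Z/p^a\Z)^\times$ (with $-1$ playing its special role at $p=2$), is a nontrivial $p$-power-order element of $H$ fixing $\zeta'$, and the exact shape of its action must be pinned down. Finally, for the dimension inequality, the assertion that the orbits ``contribute an $F$-linearly independent family of the required size'' and that these contributions add up over the primes dividing $Q$ exactly once is precisely the content to be proved (it rests on the tensor decomposition of $\Q(\zeta)$ over $\Q$ according to the prime-power parts of $Q$); as stated it is a restatement of the conclusion rather than an argument.
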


The last result of this section concerns work from \cite{Laurent}. Work inside a number field  $F$. 
For $t$, $r$ in $\N$ consider polynomials $Q_1,\dots,Q_r$ over $F$ in $t$ many variables as well as 
a finite set $Z:=\{a_{ji}\,:\,j=1,\dots,r\,;\,i=1,\dots,t\}$ in $F^\times$ . We are interested in describing the set of 
tuples  $\vec m$ in $\Z^t$ such that

\begin{equation*}\label{Eq:Laurent}\tag{**}
 \sum\limits_{j=1}^r Q_j(\vec m)\prod\limits_{i=1}^t a_{ji}^{m_i}=0.
\end{equation*}

\noindent For such an equation (\ref{Eq:Laurent}), let $H$ be the subgroup of those $\vec m$ in $\Z^t$  
such that $$\prod\limits_{i=1}^t a_{ji}^{m_i}=\prod\limits_{i=1}^t a_{j'i}^{m_i},$$ \noindent for every $j,j'\in\{1,\dots,r\}$.

\medskip\noindent
Th\'{e}or\`{e}me 6  of \cite{Laurent} describes precisely the solutions of  (\ref{Eq:Laurent}), however for our purposes
 the following simplified version suffices. 

\begin{theorem}\label{Laurent}
Suppose that $H$ is trivial. Then there are constants $\delta$, $\eta$  depending only on $Z$ and the field $F$ 
such that if $\vec m$ in $\Z^t$ satisfies (\ref{Eq:Laurent}) and for every nonempty proper $J\subseteq\{1,\dots,r\}$ 
the sum $\sum\limits_{j\in J} Q_j(\vec m)\prod_{i=1}^t a_{ji}^{m_i}$ is nonzero, then 
$$||\vec m||\leq\delta \log||\vec m||+\eta,$$
\noindent where $||\vec m||:=\max_{i}{|m_i|}$.
\end{theorem}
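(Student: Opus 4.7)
The plan is to derive the stated bound directly from Th\'eor\`eme 6 of \cite{Laurent}, whose conclusion describes all solutions of (\ref{Eq:Laurent}) in terms of cosets of subgroups of $\Z^t$ indexed by which sub-sums of the left-hand side vanish.

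First, I would recast our hypothesis in Laurent's framework. The assumption that $H$ is trivial says that the $r$ tuples of units $\left(\prod_i a_{ji}^{m_i}\right)_{j=1,\dots,r}$ become pairwise distinct for every non-zero $\vec m$, so the ``group of relations'' playing the structural role in Laurent's theorem is itself trivial. Combined with the assumption that no proper sub-sum $\sum_{j\in J} Q_j(\vec m)\prod_i a_{ji}^{m_i}$ vanishes, only the fully non-degenerate stratum in Laurent's decomposition can contribute, and in that stratum the solution set reduces to the sparse orbit for which an effective bound is available.

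Next, I would extract the explicit estimate. Laurent's bound, which rests on Baker's theorem on linear forms in logarithms over the number field $F$, produces an inequality of the form
$$||\vec m||\leq \delta \log ||\vec m||+\eta,$$
with $\delta$ and $\eta$ depending only on the heights of the $a_{ji}\in Z$ and the degree of $F$ over $\Q$. The polynomial factors $Q_j(\vec m)$ grow at most polynomially in $||\vec m||$; after passing to logarithms this contributes an additive $O(\log ||\vec m||)$ term, which is absorbed into the constant $\delta$.

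The main obstacle is translating between Laurent's notation and ours and verifying that our two non-degeneracy assumptions, namely triviality of $H$ and non-vanishing of every proper sub-sum, together pick out precisely the non-degenerate case treated in \cite{Laurent}. The logarithmic bound itself is not proved from scratch; it is inherited from Laurent's theorem, whose proof in turn relies on Baker's method for linear forms in logarithms of algebraic numbers.
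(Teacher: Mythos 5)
Your proposal matches the paper exactly: the paper offers no proof of this statement, presenting it only as a simplified consequence of Th\'eor\`eme 6 of \cite{Laurent} (with the remark afterwards noting that the independence of $\delta,\eta$ from the coefficients of the $Q_j$ follows from Laurent's proof, which indeed rests on Baker-type estimates). One small caution: triviality of $H$ means that for each non-zero $\vec m$ \emph{some} pair of products $\prod_i a_{ji}^{m_i}$, $\prod_i a_{j'i}^{m_i}$ differ, not that they are all pairwise distinct, but this does not affect the reduction.
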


\begin{remark}\label{R:boundm} The independence of the constants $\delta$, $\eta$ from the coefficients of $Q_i$ follows from the 
proof of \cite{Laurent}.  Therefore, there is some $N\in \N$ such that if $\vec m$ satisfies a non-trivial equation 
(\ref{Eq:Laurent}), then $||\vec m||\leq N$.
\end{remark}

\section{The Main Theorem}

\noindent
Here we prove the theorem stated in the introduction.

\medskip\noindent
We keep the notations from the previous sections. In particular, $K$ is an algebraically 
closed subfield of $\C$ of finite transcendence degree containing $\pi$ and the coefficients of $p$, which is an irreducible polynomial in two variables in which both variables appear.  

\medskip\noindent
Using Hadamard Factorization Theorem (see for instance \cite{Hormander}) and a result proved independently by Henson 
and Rubel \cite{Henson-Rubel} and by van den Dries \cite{vdDries-exp}, we have that $f(z)=p(z,\exp(z))$ has infinitely
 many zeros in $\C$ (for a proof of this, see \cite{marker_exp}).  Therefore in order to prove our theorem, it suffices 
to prove the following.

\begin{theorem}
 The entire function $f(z):=p(z,\exp(z))$ has finitely many zeros in $K$.
\end{theorem}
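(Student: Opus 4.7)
The plan is to argue by contradiction: assume $f$ has an infinite sequence of distinct zeros $z_1, z_2, \ldots$ in $K$. Each $u_i := \exp(z_i)$ is a root of the nonzero polynomial $p(z_i, Y) \in K[Y]$, so $u_i \in K^\times$ and therefore $u_i \in \Gamma \cap K^\times \subseteq \Gamma^*$. The overarching strategy is to specialize the whole situation to a number field and there apply the Diophantine bounds of Dvornicich--Zannier (Theorem \ref{dvornicich-zannier}) and Laurent (Theorem \ref{Laurent}).

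Choose a finitely generated extension $L \supset \Q(\mathbb U)$ containing $\exp(\vec a)$ and the coefficients of $p$, and let $\vec c \in K^{t'}$ be as provided by Lemma \ref{finitelygeneratedoverU}, so that the complement of $\mathbb U$ in $\defh{\exp(\vec a)}_{L^\times}$ is $[\exp(\vec c)]$. Fix a finitely generated $\Q$-subalgebra $R$ of $L$ containing this data and a finitely generated subgroup $G$ of $R^\times$ containing $\exp(\vec a)$, $\exp(\vec c)$, and the coefficients of $p$. Apply Lemma \ref{lemme4laurent} with $\{b_1, \ldots, b_q\} \subseteq R$ chosen to witness (via $\bar\Q$-linear combinations) the non-vanishing of every partial-sum/nondegeneracy condition that will arise, obtaining ring homomorphisms $\phi_1, \ldots, \phi_{q'} : R \to \bar\Q$ each injective on $G$. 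For at least one fixed $\phi := \phi_\ell$, infinitely many of the equations $p(z_i, u_i) = 0$ combine with the structural decomposition $u_i = \zeta_i \exp(\vec m_i \cdot \vec c)$, with $\zeta_i \in \mathbb U$ and $\vec m_i \in \Z^{t'}$ (produced by Lemmas \ref{extension} and \ref{finitelygeneratedoverU}), to specialize to a non-trivial equation of the form (\ref{Eq:Laurent}) over the number field $F$ generated by $\phi(R)$ in $\bar\Q$: the polynomial parts $Q_j(\vec m)$ absorb the $z_i$-dependent coefficients $p_j(z_i)$, and the exponential parts are products of the $\phi(\exp(c_k))$.

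In this number-field setting, Theorem \ref{dvornicich-zannier} uniformly bounds the order of $\phi(\zeta_i)$ (and hence of $\zeta_i$, by injectivity of $\phi$ on $G$), while Theorem \ref{Laurent} together with Remark \ref{R:boundm} bounds $||\vec m_i||$. Thus only finitely many pairs $(\zeta_i, \vec m_i)$ can occur, yielding only finitely many possible $u_i$'s; each such $u$ is realized by only finitely many $z_i \in K$ as a root of $p(X, u) \in K[X]$, contradicting the infinitude of the $z_i$'s. The main obstacle is the coefficient mismatch between the equation $p(z_i, u_i) = 0$ -- whose coefficients $p_j(z_i)$ live in $K$ and depend on $z_i$ -- and the $L$-coefficient hypothesis of Lemmas \ref{extension} and \ref{finitelygeneratedoverU}; bridging this forces one to pass to an infinite subsequence along which the $\vec m_i$ and $\zeta_i$ have stable enough structure to be encoded as a single relation (\ref{Eq:Laurent}) after specialization, and to verify the triviality of the auxiliary subgroup $H$ in Theorem \ref{Laurent} so that the bound applies.
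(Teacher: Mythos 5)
Your route is the paper's route: locate $\exp(z)$ in $\mathbb U\cdot[\exp(\vec c)]$ via Lemmas \ref{extension} and \ref{finitelygeneratedoverU}, specialize to a number field via Lemmas \ref{LinInd_spe}/\ref{lemme4laurent}, then bound the torsion part by Theorem \ref{dvornicich-zannier} and the exponents by Theorem \ref{Laurent}; your opening observation that $\exp(z_i)$ is a root of $p(z_i,Y)\in K[Y]$ and hence lies in $\Gamma\cap K^\times\subseteq\Gamma^*$ is a genuine shortcut past Proposition \ref{P:fterad}. However, the ``coefficient mismatch'' you flag as the main obstacle is a real gap as you leave it, and your proposed repair --- passing to a subsequence with ``stable enough structure'' --- does not work: no subsequence makes the coefficients $-p_j(z_i)/p_0(z_i)$ land in a fixed finitely generated extension $L$ of $\Q(\mathbb U)$, and without that hypothesis Lemma \ref{extension}, hence Lemma \ref{finitelygeneratedoverU}, gives you nothing, so you never obtain a single tuple $\vec c$ and the decomposition $u_i=\zeta_i\exp(\vec m_i\cdot\vec c)$ uniformly in $i$. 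The bridge is already in your hands: since $u_i\in\Gamma^*=\defh{\exp(\vec a)}_{\C^\times}$, some power $u_i^{n_i}$ lies in $[\exp(\vec a)]$, so $n_iz_i\in\Z a_1+\cdots+\Z a_s+2\pi\sqrt{-1}\,\Z$ and therefore $z_i\in\Q a_1+\cdots+\Q a_s+\Q\pi\sqrt{-1}$. Hence all coefficients $p_j(z_i)$ lie in the single finitely generated field $L=\Q(\mathbb U)\bigl(\vec a,\pi\sqrt{-1},\exp(\vec a),\text{coefficients of }p\bigr)$, independent of $i$, and Lemma \ref{finitelygeneratedoverU} (applied after passing to a nondegenerate subsum) then applies with this one $L$. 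This is what the paper's ``appropriate $L$'' is doing.

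Two further steps are asserted but not carried out. First, the triviality of $H$ in Theorem \ref{Laurent}: here $a_{ji}=\phi_\nu(d_i)^j$, the $\phi_\nu$ are injective on $G=[\vec d]$, and $[\exp(\vec c)]$ is free and torsion-free because $\vec c$ is linearly independent over $\sqrt{-1}\pi$; so $\vec m\in H$ forces $\phi_\nu(\vec d^{\vec m})^{j-j'}=1$ for some $j\neq j'$, hence $\vec d^{\vec m}=1$ and $\vec m=0$. You should record this, since it is exactly why the specializations must be injective on $G$. Second, after substituting $z_i=\tfrac{k_i2\pi\sqrt{-1}}{n_i}+l_i2\pi\sqrt{-1}+\vec m_i\cdot\vec c$, the specialized coefficients are polynomials in $(l_i,\vec m_i)$ while the exponential part of (\ref{Eq:Laurent}) involves only $\vec m_i$; adjoining $l$ as an extra exponent variable would make $H$ nontrivial, so one must instead treat $l$ as a parameter and use the uniformity of Laurent's constants in the coefficients of the $Q_j$ (the remark following Theorem \ref{Laurent}) to get $||\vec m_i||\leq T$ independently of $l_i$. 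Your endgame --- finitely many pairs $(\zeta_i,\vec m_i)$, hence finitely many $u_i$, each a value of $Y$ for which $p(X,u_i)$ has finitely many roots --- is correct and slightly cleaner than the paper's separate bound on $l$, but it only becomes available once these two points, and the gcd-normalization needed before invoking Theorem \ref{dvornicich-zannier}, are supplied.
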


\begin{proof}
Write 
$$p(X,Y)=\sum_{j=0}^{m}p_j(X)Y^j,$$
where $p_j(X)\in K[X]$.
Also set $I=\{ j \in \{0,\ldots,m\}\, |\, p_j\neq 0\}$. Since $p$ is irreducible, $0$ lies in $I$. The set
 $\{z \in \C \,|\, p_j(z)=0 $ for some $j\in I\}$ is finite. Hence in order to show that there are finitely many 
solutions in $K$ to $p(z,\exp(z))=0$ we need only prove that 

$$ W :=\{ z \in K \,|\, p(z,\ex(z))=0 \wedge \bigwedge\limits_{j\in I} p_j(z)\neq 0\} $$
is finite.

\medskip\noindent
Let $z$ be in $W$. By considering the appropriate subsum, we may assume that $(\exp(z)^j)_{j\in I\setminus\{0\}}$ 
is a nondegenerate solution of 
$$\sum_{j\in I\setminus\{0\}}-\frac{p_j(z)}{p_0(z)}x_j=1.$$

Then by taking $L$ to be an appropriate finitely generated extension of $\Q(\mathbb U)$ in Lemma \ref{finitelygeneratedoverU}, there are 
$c_1,\dots,c_{t'}$ in $K$ such that $\exp(z)$ lies in the group 
$$\mathbb U\cdot[\exp(c_1),\dots,\exp(c_{t'})].$$
Then $$z\in\Q\pi\sqrt{-1}+\Z c_1+\cdots +\Z c_{t'}.$$

\medskip\noindent
We now apply Theorem \ref{dvornicich-zannier} to get a finer description of $W$. 

\begin{claim}
There is $N\in\N$ such that if $z\in W$ then there are  $k,l,m_1,\dots,m_{t'}$ in $\Z$ and $0<n<N$ such that $k<n$, $\gcd(k,n)=1$ and 
$$z=\frac{k2\pi\sqrt{-1}}{n}+l2\pi\sqrt{-1}+\sum\limits_{j=1}^{t'} m_j c_j$$
\end{claim}

\begin{proof}
Let $z\in W\setminus\{0\}$ and choose $k,l,m_1,\dots,m_{t'}\in\Z$ and $n>0$ such that $k<n$, $\gcd(k,n)=1$, and  
$$z=\frac{k2\pi\sqrt{-1}}{n}+l2\pi\sqrt{-1}+\sum\limits_{j=1}^{t'} m_j c_j.$$
We need to find a bound $N$ on $n$ independent of $k,l,m_1,\dots,m_{t'}$. 

\medskip\noindent
Set $\vec d=\ex(\vec c)$ and $\zeta=\exp(2\pi\sqrt{-1}/n)$. We then have 
 \begin{equation*}\label{MainEqIntegers}\tag{*}
  \sum_{j\in I}p_j(\frac{k2\pi\sqrt{-1}}{n}+l2\pi\sqrt{-1}+\sum\limits_{j=1}^{t'} m_j c_j)\zeta^{kj}\cdot(\vec d^{\vec m})^j=0.
 \end{equation*}

\medskip\noindent
Let $R$ be the $\bar \Q$-algebra generated by the coefficients of $p$, $\pi\sqrt{-1}$, $\vec c$ and $\vec d$ and their inverses. Using Lemma \ref{LinInd_spe} with appropriate $b_1,\dots,b_q$, choose by Lemma \ref{LinInd_spe} some specialization $\phi$ such that 
$$\phi(p_0(\frac{k2\pi\sqrt{-1}}{n}+l2\pi\sqrt{\-1}+\sum\limits_{j=1}^{t'} m_j c_j))\neq 0.$$

\medskip\noindent
The homomorphism $\phi$ transforms (\ref{MainEqIntegers}) into a  non-trivial relation 
$$\sum_{j\in I}\phi(p_j(\frac{k2\pi\sqrt{-1}}{n}+l2\pi\sqrt{-1}+\sum\limits_{j=1}^{t'} m_j c_j))\zeta^{kj}\cdot(\phi(\vec d)^{\vec m})^j=0.$$
Reorganizing we get a relation
$$\sum_{j\in I}a_j \zeta^{jk}=0,$$
where the $a_j$'s are algebraic numbers depending on $(n,k,l,\vec m)$ and not all zero. Note however that the number field $F$ containing the $a_j$'s is independent of $(n,k,l,\vec m)$. 

\medskip\noindent
Let $j_0=\gcd(n,j)_{j\in I\setminus\{0\}}$ and 
$\zeta_0=\exp(\frac{2\pi\sqrt{-1}}{n/j_0})$.  So we have a relation
$$\sum_{j\in I}a_j \zeta_0^{\frac{jk}{j_0}}=0.$$
For our purposes we may assume that no subsum is $0$. Then Theorem \ref{dvornicich-zannier} gives a bound on $n$
 depending only on the degree of $F$ and $|I|$.  Therefore there is $N>0$ depending only on $p(X,Y)$ such that if 
$(n,k,l,\vec m)$ satisfies (\ref{MainEqIntegers}), then $n<N$.  
\end{proof}

\medskip\noindent
Using this claim we may assume,  after modifying $f$ (finitely many times)  that its zeroes in $K$ are of the form 
$$l2\pi\sqrt{-1}+\sum\limits_{j=1}^{t'} m_j c_j$$ with $l,m_1,\dots,m_{t'}\in\Z$. Hence we have reduced the theorem to prove that there are only finitely many $(l,\vec m)\in\Z^{1+t'}$ such that
\begin{equation*}\label{MainEQ}\tag{***}
  \sum_{j\in I}p_j(l2\pi\sqrt{-1}+\sum\limits_{j=1}^{t'} m_j c_j )(\vec d^{\vec m})^j=0 \text{ and }p_j(l2\pi\sqrt{-1}+\vec m\cdot \vec c)\neq 0\text { for }j\in I.
\end{equation*}

\medskip\noindent
Let $R$ be the $\Q$-algebra generated by the coefficients of $p$, $\pi\sqrt{-1}$, $\vec c$, $\vec d$ 
and their inverses. Let $G$ be the multiplicative subgroup of $R^\times$ generated by $\vec d$.  
Choose $\phi_1,\dots,\phi_q$ ring homomorphisms from $R$ to $\bar\Q$ injective on $G$ as in Lemma 
\ref{lemme4laurent} and let $F$ be the compositum field of all their images. 

\medskip\noindent
Let $(l,\vec m)$ satisfy (\ref{MainEQ}) and choose $\nu$ in $\{1,\dots,q\}$ such that
$$\phi_\nu(p_0(l2\pi\sqrt{-1}+\sum\limits_{j=1}^{t'} m_j c_j))\neq 0.$$
The map $\phi_\nu$ transforms (\ref{MainEQ}) into
$$
 \sum_{j\in I}p_{jl}(\vec m)\prod_{i=1}^{t'}\phi_\nu(d_i^j)^{m_i}=0,
$$
where $p_{jl}(\vec X)$ is is a polynomial in $(1+t')$-variables such that 
$$p_{jl}(\vec m)=\phi_\nu(p_{j}(l2\pi\sqrt{-1}+\vec m\cdot \vec c)).$$ 
We may assume that no subsum is zero. Hence applying Theorem \ref{Laurent} and the remark after it, there is $T$ in $\N$ 
independent of $l$ such that $||\vec m||\leq T$. The proof finishes by noting that for each $\vec m$, there are finitely 
many $l$'s satisfying (\ref{MainEQ}).
 
\end{proof}

\begin{question}
Do the techniques used in the proof of \ref{T:main} carry over to the case of a system of two polynomial equations?  
\begin{align*}
 p_1(x,y,\ex(x),\ex(y)) &= 0 \\ p_2(x,y,\ex(x),\ex(y)) &= 0
\end{align*}
It is not clear to the authors how to show that there are infinitely many solutions to the above system, 
since the proof of that depends heavily on Hadamard's factorization theorem \cite{Hormander} for one single 
complex variable.  
\end{question}

\end{document}